\definecolor{dullmagenta}{rgb}{0.4,0,0.4}   
\definecolor{darkblue}{rgb}{0,0,0.4}
\definecolor{darkgreen}{rgb}{0,0.4,0}
\newtheorem{thm}{Theorem}[section]
\newtheorem{defi}[thm]{Definition}
\newtheorem{prop}[thm]{Proposition}
\newtheorem{cor}[thm]{Corollary}
\newtheorem{lemma}[thm]{Lemma}
\newtheorem{rk}[thm]{Remark}
\theoremstyle{theorem}
\newtheorem{ltheorem}{Theorem}
\newcommand{\tens}{\otimes}
\newcommand{\bC}{{\mathbb{C}}}
\newcommand{\bR}{{\mathbb{R}}}
  \newcommand{\A}{{\mathcal{A}}}
  \newcommand{\B}{{\mathcal{B}}}
  \newcommand{\E}{{\mathcal{E}}}
  \newcommand{\M}{{\mathcal{M}}}
  \newcommand{\N}{{\mathcal{N}}}
\renewcommand{\phi}{\varphi}
\newcommand{\upchi}{{\raise.35ex\hbox{\ensuremath{\chi}}}}
\renewcommand{\leq}{\leqslant}
\renewcommand{\geq}{\geqslant}
\newcommand{\fin}{\hspace*{\fill} $\square$ \vskip0.2cm}
\begin{document}

\vskip-30pt

\null

\title{On spectral gaps of Markov maps}
\date{}

\author[J.M.\ Conde-Alonso]{Jos\'e M. Conde-Alonso}
\address{\noindent Departament de Matem\`atiques, Facultat de Ci\`encies, \newline \indent Universitat Aut\`onoma de Barcelona, 08193 Barcelona, Spain}
\email{jconde@mat.uab.cat}

\author[J. Parcet]{Javier Parcet}
\address{\noindent Instituto de Ciencias Matem{\'a}ticas CSIC-UAM-UC3M-UCM, Consejo Superior de
Investigaciones Cient{\'\i}ficas \newline \indent C/ Nicol\'as Cabrera 13-15. 28049, Madrid. Spain}
\email{javier.parcet@icmat.es}

\author[\'E. Ricard]{\'Eric Ricard}
\address{\noindent Laboratoire de Math{\'e}matiques Nicolas Oresme, Universit{\'e} de Caen Normandie,14032 Caen Cedex, France}
\email{eric.ricard@unicaen.fr}

\thanks{2010 {\it Mathematics Subject Classification:} 46L51; 47A30.} 
\thanks{{\it Key words:} Noncommutative $L_p$ spaces, Markov maps, spectral gap}

\begin{abstract}
It is shown that if a Markov map $T$ on a noncommutative probability space $\mathcal{M}$
has a spectral gap on $L_2(\mathcal{M})$, then it also has one on $L_p(\mathcal{M})$ for
$1<p<\infty$. For fixed $p$, the converse also holds if $T$ is factorizable.
Some results are also new for classical probability spaces.
\end{abstract}

\thanks{JM Conde-Alonso  was supported in part by ERC Grant 32501. J Parcet was supported in part by CSIC Grant PIE 201650E030 (Spain).}

\maketitle

\addtolength{\parskip}{+1ex}


\vskip-30pt

\null

\section*{Introduction}

Many definitions of spectral gaps have been considered  for linear operators.
  They are intersting as they often yield nice properties for
functional calculus or ergodic theory. In this note we consider contractive
linear maps $T$ on (noncommutative) $L_p$-spaces whose fixed
points are $1$-complemented by some projection $\mathsf{E}$. Then we say
that $T$ has a $L_p$-spectral gap when $\|T(1-\mathsf{E})\|<1$. Of course,
in this situation, $1$ is at a positive distance from the rest of the
spectrum of $T$. When $T$ and $\mathsf{E}$ can be considered
simultaneously on all $L_p$ ($1\leq p\leq \infty$) it is a natural question to know if 
$L_p$-spectral gaps can be interpolated.  This is precisely the
topic what we address in this note.  Our motivation comes from the
paper \cite{CMP}, where this was the key to obtain certain
interpolation results that in turn yielded Calder\'on-Zygmund
estimates in nondoubling contexts.  We focus on the particular class
of Markov maps. In other words, unital, completely positive, trace preserving maps acting on 
noncommutative probability spaces.  In the commutative situation, they
exactly correspond to the usual Markov operators.    Our main
 result reads as follows (precise definitions below):

\begin{ltheorem}
\label{theoremA}
Given any Markov map $T$ and $1 < p < \infty \hskip-1pt :$
\begin{itemize}
\item[(1)] If $T$ has an $L_2$-spectral gap, then it also has an $L_p$-spectral gap.
\item[(2)] If $T$ has an $L_p$-spectral gap and is factorizable, then it also has an $L_2$-spectral gap.
\end{itemize}
\end{ltheorem}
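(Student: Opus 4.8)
First I would record the structure shared by both parts. The fixed-point projection $\mathsf{E}$ is the mean-ergodic projection of $T$, hence the trace-preserving conditional expectation onto the fixed-point subalgebra; in particular $\mathsf{E}$ is a complete contraction on every $L_p(\M)$, is self-adjoint on $L_2(\M)$, and commutes with $T$ in the strong sense $T\mathsf{E}=\mathsf{E}T=\mathsf{E}$. Writing $T_0:=T(1-\mathsf{E})=(1-\mathsf{E})T$, an immediate induction gives $T_0^n=T^n(1-\mathsf{E})=T^n-\mathsf{E}$ for all $n\ge 1$. Being a Markov map, $T$ is a complete contraction on $L_1(\M)$ and on $L_\infty(\M)$, hence on every $L_p(\M)$; moreover $T$ leaves the complemented subspace $V_p:=(1-\mathsf{E})L_p(\M)$ invariant and acts there as a contraction, and since $1-\mathsf{E}$ is bounded on the whole scale the subspaces $\{V_p\}$ themselves form an interpolation scale, $[V_{p_0},V_{p_1}]_\theta=V_{p_\theta}$.

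\textbf{Part (1).} The plan is to interpolate the restricted contraction rather than $T_0$ on the full space. At the endpoints $\|T|_{V_2}\|=\|T_0\|_2=\lambda<1$ (the two norms agree because $\|1-\mathsf{E}\|_2=1$) and $\|T|_{V_\infty}\|\le 1$. Complex interpolation along $\{V_p\}$ then yields, for $2\le p<\infty$,
\[
\big\|T|_{V_p}\big\|\ \le\ \lambda^{2/p}\ <\ 1,
\]
and the range $1<p<2$ follows by duality from the same estimate applied to the Markov map $T^*$, which shares the projection $\mathsf{E}$. This is exactly the asserted gap on the range of $1-\mathsf{E}$. I expect the one delicate point to be the comparison between this subspace norm and the full operator norm $\|T_0\|_{L_p\to L_p}$: crudely interpolating $T_0$ between $L_2$ and $L_\infty$ only beats $1$ for $p$ in a neighbourhood of $2$, since $\|1-\mathsf{E}\|_p$ may exceed $1$; passing to $V_p$ (equivalently, to the powers $\|T_0^n\|_p\le\lambda^{2n/p}\|1-\mathsf{E}\|_p$, which tend to $0$) is what removes this defect and secures the gap for every $p$.

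\textbf{Part (2).} Here I would argue in the reverse direction through the conjugate exponent. Duality always gives $\|T_0\|_{p'}=\|(T^*)_0\|_{p}$, so once one knows that $T$ and its adjoint share the same $L_p$-gap, $\|(T^*)_0\|_p=\|T_0\|_p$, the gap becomes self-conjugate: $\|T_0\|_{p'}=\|T_0\|_p<1$. Since $\tfrac12=\tfrac12\big(\tfrac1p+\tfrac1{p'}\big)$, complex interpolation of the single operator $T_0$ between $L_p$ and $L_{p'}$ then gives $\|T_0\|_2\le\|T_0\|_p^{1/2}\|T_0\|_{p'}^{1/2}=\|T_0\|_p<1$, the desired $L_2$-gap. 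Equivalently one may pass to the self-adjoint Markov map $S=T^*T$, for which $S(1-\mathsf{E})=T_0^{*}T_0$ and $\|S_0\|_2=\|T_0\|_2^2$, and use that a self-adjoint gap is automatically symmetric under $p\leftrightarrow p'$.

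\textbf{The role of factorizability, and the main obstacle.} Everything above is formal except the symmetry $\|(T^*)_0\|_p=\|T_0\|_p$, and this is precisely where factorizability must enter; it is also exactly the feature a general Markov map can lack, which explains why the converse fails without it. Writing a factorization $T=J_0^{*}J_1$ with trace-preserving unital $*$-homomorphisms $J_0,J_1:\M\to\N$, each $J_i$ is an isometry on every $L_p(\N)$ and $T^{*}=J_1^{*}J_0$ is obtained simply by interchanging the two embeddings. The hard part of the whole argument will be to convert this structural symmetry into the analytic identity $\|(T^*)_0\|_{L_p\to L_p}=\|T_0\|_{L_p\to L_p}$; I would do this by producing, from the factorization together with the canonical trace-preserving anti-isomorphism of $\M$ onto its opposite algebra, an $L_p$-isometric intertwiner carrying $T_0$ to $(T^*)_0$. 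Granting that lemma, the interpolation through $L_2$ closes the proof at once.
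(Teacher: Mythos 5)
Both halves of your proposal run into the same obstruction, and it is one the paper itself flags in the introduction as the reason the ``obvious'' interpolation proof fails. In part (1), everything hinges on your assertion that the complemented subspaces $V_p=(1-\mathsf{E})L_p(\M)$ form an \emph{isometric} complex interpolation scale, $[V_2,V_\infty]_\theta=V_p$. For a common bounded projection one only gets this identification up to constants governed by the norm of the projection, and $\|\mathrm{Id}-\mathsf{E}:\M\to\M\|=2$ in general. Unwinding your argument with the correct constants gives precisely $c_p\leq c_2^{2/p}\,2^{1-2/p}$ for $p>2$, which is the bound the paper explicitly dismisses as ``usually not enough'': it is below $1$ only for $p$ in a window around $2$ whose size depends on $c_2$. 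Your fallback via powers, $\|T_0^n\|_p\leq c_2^{2n/p}\|1-\mathsf{E}\|_p\to 0$, proves only that the \emph{spectral radius} of $T_0$ on $L_p(\M)$ is at most $c_2^{2/p}<1$; but the theorem (and the paper's definition of spectral gap) concerns the \emph{norm} $c_p=\|T:L_p^0(\M)\to L_p^0(\M)\|$, and a contraction can have spectral radius strictly less than $1$ while its norm equals $1$. Nothing in your argument rules this out, so part (1) is not established. This is exactly why the paper abandons interpolation of the complemented subspaces and instead argues through Mazur maps: the $2\times 2$ trick to reduce to $x=x^*$, the splitting $x=x_+-x_-$, Lemmas 2.2 and 2.3 to pass from $\|T(x_\pm)\|_p$ to $\|T(x_\pm^{p/2})\|_2$ where the $L_2$ gap can be applied, and the H\"older continuity of Mazur maps together with Ando's inequality to return to $L_p(\M)$.

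Part (2) has two gaps. First, the whole proof is conditional on the identity $\|(T^*)_0\|_p=\|T_0\|_p$, which you do not prove (``granting that lemma\dots''); this is the real content, not a technicality. Note that for factorizable $T$ the qualitative symmetry ``$T$ has an $L_p$ gap iff $T^*$ does'' is a \emph{consequence} of Theorem A (gap at $p$ implies gap at $2$ implies gap at $p'$, then dualize), so taking it as an ingredient is close to circular. Your proposed mechanism cannot deliver it: in the commutative case every Markov map is factorizable and the opposite-algebra anti-isomorphism is trivial, so your construction would have to produce, for an arbitrary bistochastic kernel, an $L_p$-isometry conjugating $T_0$ into $(T^*)_0$, and there is no reason such an intertwiner exists for a generic non-self-adjoint Markov operator. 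Second, even granting the lemma, your final step interpolates the full-space operator $T_0$ between $L_p$ and $L_{p'}$ and needs $\|T_0\|_{L_p\to L_p}<1$, whereas the hypothesis only gives the restricted norm $c_p<1$; since $\|T_0\|_{L_p\to L_p}$ can be as large as $c_p\|1-\mathsf{E}\|_p$, it may exceed $1$ when $c_p$ is close to $1$ --- the same projection-norm defect as in part (1). The paper's proof of part (2) uses factorizability analytically rather than structurally: writing $T=\mathsf{E}\circ\pi$ with $\pi$ a trace-preserving $*$-homomorphism (hence an $L_p$-isometry commuting with Mazur maps), it transfers the $L_p$ gap to an $L_2$ gap via the Mazur-map estimate of Lemma 3.1, with quantitative control of $1-c_2$ in terms of $1-c_p$.
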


Another way of formulating our main result is saying that, under the
additional (and very natural in examples) condition of being
factorizable, if $T$ has an $L_p$-spectral gap for some $1<p<\infty$
then it also does for all $1<q<\infty$. When the underlying space is a
classical probability space, the assumption is automatic.
The rest of the paper is
devoted to developing the necessary machinery and definitions and to
the proof of Theorem \ref{theoremA}. We will also show an application
to interpolation theory in a context |inspired by the aforementioned
\cite{CMP}| where we have two $L_p$ spaces over the same measure space
quotiented by two different subalgebras.

In most examples $L_2$-spectral gaps are easy to determine, think for instance of
Fourier multipliers over the torus. It turns out that they also behave quite well with respect to algebraic
operations. For instance if $T$ and $S$ have an $L_2$-spectral gap,
then the tensor product map $T\otimes S$ also does. Thus, Theorem
\ref{theoremA} can be used to produce many examples of $L_p$-spectral gaps.

 To prove Theorem \ref{theoremA} one may be tempted to use an
 ultraproduct argument and Mazur maps. This probably could be done but
 would require lots of technicalities, especially in the
 noncommutative situation as ultraproducts of noncommutative
 probability spaces are not probabilty spaces any longer (one has to
 deal with type III algebras). Our approach has the advantage to give
 quantitative estimates for the spectral gaps.

 K.  Oleszkiewicz kindly informed us that $(1)$ is also proved in \cite{HMO} in the commutative situation. The proof given there also works in the noncommutative setting.


\section{Markov maps and noncommutative $L_p$ spaces}

We work in the general setting of noncommutative integration, for which a rather complete introduction and definitions can be found in \cite{PX}. Let $(\M,\tau)$ be a noncommutative probability space, so that $\mathcal{M}$ is a finite von Neumann algebra equipped with a normal faithful tracial state $\tau$. Given $1\leq p<\infty$, the noncommutative $L_p$ spaces associated to $(\M,\tau)$ are defined as 
$$
L_p(\M)= \Big\{ f\in L_0(\M,\tau)\;:\; \|f\|_p = \tau \big(|f|^p\big)^{\frac{1}{p}}<\infty\Big\}.
$$ 
Above, $L_0(\M,\tau)$ denotes the set of $\tau$-measurable operators. Strictly speaking, we should refer to the trace $\tau$ in the notation for $L_p(\mathcal{M})$, but this will not be relevant here. As usual, we can think that $\M=L_\infty(\M)$ is represented in $\mathcal{B}(L_2(\M))$, the bounded linear operators on $L_2(\mathcal{M})$, by left
multiplications. It is possible to avoid $L_0(\M,\tau)$ in the definition of the $L_p$ spaces in our situation: $L_p(\M)$ is just the completion of $\M$ in the $L_p$ norm, because the finiteness of $\tau$ yields $L_\infty(\M)\subset L_p(\M)$. In the commutative situation, $\M$ is just $L_\infty(\Omega)$ over some probability space $(\Omega,\mu)$, $\tau$ is the integration against $\mu$ and $L_p(\M)=L_p(\Omega)$ for $1<p<\infty$.

Noncommutative $L_p$ spaces share many properties of classical $L_p$, but usually inequalities for operators are more difficult to deal with. To overcome some of the difficulties that arise due to noncommutativity, the main technical tools we will be relying on are estimates on Mazur maps.  The Mazur map is the classical norm preserving map
$$
M_{p,q}: L_p(\mathcal{M})\to L_q(\mathcal{M}) \;\; \mathrm{ given} \; \mathrm{ by} \;\;  M_{p,q}(f)=f|f|^{p/q-1},
$$ 
where as usual we take $|f|^2=f^*f$, as in the definition of the noncommutative $L_p$ norm. We know from \cite{R1} that the map $M_{p,q}$ is $\min\{1,\frac p q\}$-H\"older continuous on spheres, just as in the commutative case. We are interested in working with a particular set of maps.

\begin{defi}
A map $T:\M\to \M$ is called Markov on $(\M,\tau)$ when$\hskip1pt :$
\begin{itemize}
\item[i)] $T$ is unital$\hskip1pt :$ $T(1_{\mathcal{M}})=1_{\mathcal{M}}$,
\item[ii)] $T$ is completely positive$\hskip1pt :$ $(T(x_{i,j}))\in \mathbb{M}_n(\M)_+$ for all $(x_{i,j})\in \mathbb{M}_n(\M)_+$,
\item[iii)] $T$ is trace preserving$\hskip1pt :$ $\tau\circ T=\tau$.
\end{itemize}
\end{defi}

It is then classical that $T:L_\infty(\M)\to L_\infty(\M)$ admits a unique contractive extension to $L_p(\mathcal{M})$ for $1\leq p\leq \infty$ that we will still denote by $T$. More generally, one can give a definition of a Markov map $T:\M\to \N$ between two semifinite von Neumann algebras. Let us see now some standard examples of Markov maps:

\begin{enumerate}
\item Given a classical probability space $(\Omega,\mu)$, any unital, positive and measure preserving map $T:L_\infty(\Omega)\to L_\infty (\Omega)$ is Markov. For instance, it may be given by a composition operator $T(f)(x)=f(\phi(x))$, where $\phi:\Omega\to \Omega$ is any measure preserving transformation.

\vskip3pt

\item Let $G$ be a discrete group. Its associated group von Neumann algebra 
$$
\mathcal{L}(G)=\Big\{\lambda(g):g\in G\Big\}''\subset \mathcal{B}(\ell_2(G))
$$
is the von Neumann algebra generated by the left regular representation $\lambda(g)$. It can be naturally viewed as a noncommutative probability space with the trace given by the vector state associated to $\delta_e$, where $e$ is the unit in $G$. Any normalized positive definite function $c: G\to \bC$ gives rise to a Fourier multiplier $F_c(\lambda(g))=c_g\lambda(g)$ that is a Markov map. Moreover, when $G$ is abelian with compact Pontryagin dual $\widehat G$ and normalized Haar measure $\mu$, then $L_p(\mathcal{L}(G))=L_p(\widehat G,\mu)$. Any Markov Fourier multiplier $F_c$ as above is then given by the convolution on $\widehat G$ with a probability measure $\gamma$ such that $\widehat \gamma(g)=c_g$.

\vskip3pt

\item If $\M=\mathbb{M}_n$, the family of $n \times n$ matrices equipped with its normalized trace, any Markov map is given by $T(x)= \sum_{\ell=1}^N a_\ell x a_\ell^*$ with $a_\ell \in \mathbb{M}_n$ such that  
$$
1_{\mathbb{M}_n}=\sum_{\ell=1}^N a_\ell a_\ell^*=\sum_{\ell=1}^N a_\ell^* a_\ell.
$$
For instance, if  $S((x_{i,j}))=(s_{i,j}x_{i,j})$ is a Schur multiplier, it is Markov if and only if $(s_{i,j})\geq 0$ and $s_{i,i}=1$ for all $i$.

\vskip3pt

\item Any $*$-representation $\pi: \M\to \M$ is a Markov map if and only if it is trace preserving.

\vskip3pt

\item If $\mathcal{M}$ is finite and $\N\subset \M$ is a von Neumann subalgebra, then the trace preserving conditional expectation onto $\N$, $\mathsf{E}_{\mathcal{N}}:\M\to \M$, is a Markov map.
\end{enumerate}

Given any Markov map $T:\M\to \M$, the set of its fixed points is a von Neumann sub-algebra $\N\subset \M$. We know from \cite{Ch} that this algebra is exactly the multiplicative domain of $T$. The same holds for the $L_p$ extension of $T$: the points fixed by $T$ on $L_p(\mathcal{M})$ coincide with $L_p(\N)$. Moreover, the conditional expectation $\mathsf{E}_{\mathcal{N}}$ onto $\N$ commutes with $T$. This can be seen by applying the von Neumann ergodic theorem to $T$ on $L_2(\mathcal{M})$. We now make precise the notion of $L_p$-spectral gap that we shall be using. To that end, we need to introduce the following notation: 
$$
L_p^0(\M)=\Big\{x\in L_p(\M)\; : \; \mathsf{E}_{\mathcal{N}}(x)=0\Big\}.
$$
$L_p^0(\mathcal{M})$ is complemented in $L_p(\mathcal{M})$ by $\mathrm{Id}-\mathsf{E}_{\mathcal{N}}$. The notion of $L_p$-spectral gap is then given by certain norm estimates:

\begin{defi}
We say that a Markov map $T:\M\to \M$ with fixed points algebra $\N$ has a spectral gap on $L_p(\mathcal{M})$ if
$$
c_p: =\left \|T:L_p^0(\M)\to L_p^0(\M)\right \|<1,
$$
that is, if there is a constant $c<1$ such that for any $x\in L_p(\M)$ with $\mathsf{E}_{\mathcal{N}}x=0$, we have $\|T(x)\|_p\leq c \|x\|_p$.
\end{defi}

We can now justify the fact that having an $L_p$-spectral gap with constant $c<1$ implies that $1$ has to be an isolated point of the spectrum. Indeed, suppose not and that for $\epsilon$ arbitrarily small (any $\epsilon < (1-c)$ suffices) $\lambda$ is an element of the spectrum of $T$ such that $|1-\lambda| < \epsilon$ with associated eigenvector $x$. Then we may consider the vector $z=x-\mathsf{E}_\mathcal{N} x$, which belongs  to $L_p^0(\mathcal{M})$. Since $T$ commutes with $\mathsf{E}_\mathcal{N}$,
$$
T(z) = T(x) - T(\mathsf{E}_\mathcal{N}(x)) = T(x) - \mathsf{E}_\mathcal{N}(T(x)) = \lambda (x-\mathsf{E}_\mathcal{N}x) = \lambda z,
$$
so $z$ is also an eigenvector associated to the same eigenvalue and $\|Tz\|_p = |\lambda| \|z\|_p > c\|z\|_p$, violating the $L_p$-spectral gap condition.

Due to complementation, one is tempted to try to relate spectral gaps using complex interpolation and prove Theorem \ref{theoremA} in that way. But since $\| \mathrm{Id}-\mathsf{E}_{\mathcal{N}} : \M\to \M\|=2$ in general, this only gives that $c_p\leq c_2^{2/p}2^{1-2/p}$ for $p>2$, which is usually not enough. This is why we need to employ another approach based on the use of Mazur maps mentioned above. Also, for the backwards direction of Theorem \ref{theoremA}, we will need to consider a particular type of Markov maps:

\begin{defi}
\label{factorizable}
A Markov map $T$ on $(\M,\tau)$ is factorizable if there exist a bigger finite von Neumann algebra $(\tilde \M,\tilde \tau)\supset(\M,\tau)$ and a $*$-representation $\pi :\M\to \tilde \M$ such that
$$
\forall \,x\in \M, \;\;\; \tau(x)=\tilde \tau(x)=\tilde \tau(\pi(x)) \;\;\; \mathrm{and} \;\;\;  T(x)=\mathsf{E}_{\mathcal{M}} \pi(x).
$$
\end{defi}

Theorem \ref{theoremA} then states that if $T$ is factorizable in the sense of definition \ref{factorizable} then having a spectral gap in $L_p(\mathcal{M})$ is equivalent to having a spectral gap in $L_2(\mathcal{M})$. This notion appeared in \cite{A} and turned out to be quite useful to deal with analytical problems. It follows from \cite{HM} that there are Markov maps that are not factorizable. However, most natural examples are, see \cite{R3}. If $\M=L_\infty(\Omega)$ then all Markov maps are factorizable. This corresponds to the basic construction of Markov
chains. On the other hand, a Fourier multiplier $F_c$ on a discrete group $G$ is factorizable if $c:G\to \bR$ is positive definite, and a Schur multiplier $S$ is factorizable if it is a Markov map and $(m_{i,j})\in \mathbb{M}_n(\bR)$. Finally, a product of factorizable maps is still factorizable.


 
\section{$L_2$-spectral gap implies $L_p$-spectral gap}
\label{sec2}

This section is devoted to the proof of the forward direction of Theorem \ref{theoremA}, which is contained in the next result. We keep all notations from the previous paragraphs.

\begin{thm}\label{theoA}
Assume that a Markov map $T$ on $(\M,\tau)$ has a spectral gap on $L_2(\mathcal{M})$ with constant $c_2<1$. Then $T$ also admits a spectral gap in $L_p(\mathcal{M})$ for any $1<p<\infty$ with constant $c_p=c(p,c_2)<1$. Moreover, the following estimates hold for some universal $C>0$:
$$
\lim_{c_2\to 1}\frac {1-c_p}{1-c_2} \geq C \begin{cases} p-1 & \mbox{if } \ p < 2, \\ \displaystyle \frac{1}{p}\Big(\frac {\mathrm{log}\; 2}{2p}\Big)^p & \mbox{if } \ p > 2. \end{cases}
$$
\end{thm}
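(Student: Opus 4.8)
The plan is to prove the contraction estimate $\|Tx\|_p \le c_p\|x\|_p$ directly for $x \in L_p^0(\M)$ by transferring the $L_2$ information through the Mazur maps, treating the ranges $1<p\le 2$ and $2\le p<\infty$ separately, since the two regimes of the stated bound reflect genuinely different mechanisms. First I would normalize $\|x\|_p=1$ and, since $\mathsf{E}_{\mathcal{N}}x=0$, record the $L_2$-gap in quantitative form: because $T$ commutes with $\mathsf{E}_{\mathcal{N}}$ and preserves $L_2^0(\M)$ orthogonally, the decomposition $Tx=\mathsf{E}_{\mathcal{N}}x+T(x-\mathsf{E}_{\mathcal{N}}x)$ gives $\|Tx\|_2^2 \le \|x\|_2^2 - (1-c_2^2)\,\|x-\mathsf{E}_{\mathcal{N}}x\|_2^2$ for every $x$. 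The whole problem is then to convert a deficit measured in $L_2$ into a deficit measured in $L_p$, the difficulty being that $x$ need not even lie in $L_2$ when $p<2$.

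For $1<p\le 2$ I would use the $2$-uniform convexity of $L_p(\M)$, whose modulus is comparable to $p-1$ (Ball--Carlen--Lieb in the noncommutative setting). This furnishes a quantitative comparison of the $L_p$-geometry near the fixed-point algebra with the $L_2$-geometry, so that the $L_p$-deficit $1-\|Tx\|_p$ is bounded below by a multiple of $(p-1)$ times an $L_2$-deficit, which the gap above controls from below. This is exactly the source of the factor $p-1$ in the stated estimate, and it is the cleaner of the two cases.

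For $2\le p<\infty$ I would instead transfer through the Mazur map $M_{p,2}\colon L_p(\M)\to L_2(\M)$, which is norm preserving and, being Lipschitz on spheres in this range, lets one compare $L_p$-distances to $L_2$-distances. The scheme is to pass from $x$ to $a=M_{p,2}(x)$, apply the quantitative $L_2$-gap to $a-\mathsf{E}_{\mathcal{N}}a$, and push the resulting deficit back to $L_p$ via $M_{2,p}$, whose Hölder exponent $2/p$ is responsible for the rapid decay $\big(\tfrac{\log 2}{2p}\big)^p$ in $p$. I expect this direction to be lossy and to require an optimization over a truncation or scaling parameter, which is where the precise constant, and the $\log 2$, should come from; this $p>2$ case is where I expect the real work to lie.

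The main obstacle, common to both cases, is that $T$ does not commute with the nonlinear Mazur maps and that $M_{p,2}$ does not preserve the condition $\mathsf{E}_{\mathcal{N}}(\,\cdot\,)=0$, so one cannot simply run the $L_2$ estimate on the transported vector. My plan is to avoid commuting $T$ past the Mazur map altogether: I would work only with the scalar norm deficits, using that $T$ is a simultaneous contraction on every $L_r(\M)$, that unitality and complete positivity yield one-sided Jensen-type comparisons between $T$ and the power functions defining the Mazur map, and that $\|\mathsf{E}_{\mathcal{N}}(M_{p,2}x)\|$ can be controlled by the Hölder continuity of $M_{p,2}$ relative to the fixed-point subalgebra. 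Finally, to extract the asymptotics as $c_2\to 1$, I would linearize the deficits at first order: in that regime all constants sit infinitesimally close to $1$ and the gap is governed by the associated quadratic form, which is what yields a bound on $\lim_{c_2\to1}(1-c_p)/(1-c_2)$ rather than on $c_p$ itself.
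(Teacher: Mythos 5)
Your sketch for $p>2$ is, in outline, the paper's own argument: compare $\|T(x)\|_p$ with $\|T(x^{p/2})\|_2$ through a Jensen-type operator inequality (Lemma \ref{pbig}), apply the quantitative $L_2$-gap to $x^{p/2}$, and return to $L_p$ by the $2/p$-H\"older continuity of $M_{2,p}$ (Ando's inequality). The genuine gap is your $1<p<2$ case. Ball--Carlen--Lieb $2$-uniform convexity, $\big\|\tfrac{x+y}{2}\big\|_p^2+(p-1)\big\|\tfrac{x-y}{2}\big\|_p^2\le\tfrac12\big(\|x\|_p^2+\|y\|_p^2\big)$, is a statement entirely inside the $L_p$ scale: the ``$2$'' is the power type of the modulus, not a bridge to $\|\cdot\|_2$. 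It provides no comparison between the deficit $1-\|Tx\|_p$ and any $L_2$-deficit, and, as you note yourself, for $p<2$ the element $x$ need not even lie in $L_2(\M)$, so there is no $L_2$-quantity for the hypothesis to act on; convexity alone cannot manufacture one without a map into $L_2$. The paper's mechanism for $p<2$ is again a power-map transfer: for $x\ge 0$, complex interpolation (Lemma \ref{psmall}) gives $\|T(x)\|_p\le\|T(x^p)\|_1^{2/p-1}\|T(x^{p/2})\|_2^{2-2/p}$, which is exactly what lets the $L_2$-gap be applied to $x^{p/2}\in L_2(\M)$. The factor $p-1$ in the asymptotics comes from the resulting exponent $p/(p-1)$ (the dichotomy $\gamma^{p/(p-1)}\le c_2^2$ forces $1-\gamma\gtrsim\tfrac{p-1}{p}(1-c_2)$), not from a convexity modulus; the numerical match with the BCL constant is a coincidence. (A legitimate alternative for $p<2$ is duality, since $T^*$ is Markov with the same $L_2$-gap and $\|T^*:L_{p'}^0\to L_{p'}^0\|=\|T:L_p^0\to L_p^0\|$, but that imports the $p'>2$ constant, far weaker than $p-1$.)

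A second, structural omission affects both ranges: the Jensen-type comparisons you invoke (operator convexity of $t\mapsto t^\alpha$, giving $T(a)^\alpha\le T(a^\alpha)$) are available only for \emph{positive} operands, and $M_{p,2}(x)$ is not positive. One must therefore reduce to $x=x^*$ by the $2\times 2$ trick and split $x=x_+-x_-$, and this is where most of the work in the paper actually lies: $\mathsf{E}_{\N}(x)=0$ gives no control whatsoever on $\mathsf{E}_{\N}(x_\pm)$, so the gap argument applied to each part only shows that $x_\pm$ is close to $L_p(\N)$, and these conclusions must be recombined using $\|a-b\|_p^p\le\|a\|_p^p+\|b\|_p^p$, the inequality $\|a-\mathsf{E}_{\N}a\|_p\le\|a\|_p$ for $a\ge0$ and $p>2$, and a case analysis on the size of $\|x_-\|_p$. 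Your plan never performs this decomposition, and controlling $\|\mathsf{E}_{\N}(M_{p,2}x)\|$ by H\"older continuity does not substitute for it, because the obstruction is the positivity required by the Jensen step, not the size of that projection. Incidentally, this recombination --- not an optimization over a truncation or scaling parameter --- is where the $\log 2$ in the statement comes from: it enters through $\delta_p=\tfrac12\big(1-2^{-1/p}\big)\sim\tfrac{\log 2}{2p}$, which is then raised to the power $p$ for the reason you correctly guessed, namely the $2/p$-H\"older exponent of $M_{2,p}$.
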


The proof of Theorem \ref{theoA} requires a couple of short auxiliary lemmas.

\begin{lemma}\label{psmall}Let $1<p<2$, $x\in L_p^+(\M)$ and $T$ as above. Then
$$
\|T(x)\|_p\leq \|T(x^p)\|_1^{2/p-1}\|T(x^{p/2})\|_2^{2-2/p}.
$$
\end{lemma}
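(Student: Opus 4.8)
The plan is to realise $T(x)$ as the image under $T$ of a weighted operator geometric mean and then exploit the two basic inequalities that such means satisfy. By a routine density argument we may assume $x\in\M$, $x\geq 0$. Write $\theta=2-\frac2p$, so that $\theta\in(0,1)$ and $1-\theta=\frac2p-1$ for $1<p<2$, and for positive operators $A,B$ let $A\#_\theta B=A^{1/2}\big(A^{-1/2}BA^{-1/2}\big)^{\theta}A^{1/2}$ denote the weighted geometric mean (extended to $A\geq 0$ by continuity). Since $x^{p}$ and $x^{p/2}$ commute, the mean is computed by functional calculus and
\[
x^{p}\#_{\theta}\,x^{p/2}=x^{p(1-\theta)}\,x^{(p/2)\theta}=x^{\,p(1-\theta/2)}=x,
\]
because $p(1-\theta/2)=p\cdot\frac1p=1$. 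This identity is what drives the whole argument, and the point of using the geometric mean is that it behaves well both under positive maps and under $L_p$ estimates.

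First I would push the identity through $T$. The weighted geometric mean is a Kubo--Ando operator mean, hence satisfies the transformer inequality $S(A\#_\theta B)\leq S(A)\#_\theta S(B)$ for every unital positive linear map $S$; applying it to the Markov map $T$ with $A=x^{p}$, $B=x^{p/2}$ gives
\[
0\leq T(x)=T\big(x^{p}\#_{\theta}\,x^{p/2}\big)\leq T(x^{p})\,\#_{\theta}\,T(x^{p/2}).
\]
Because $0\leq C\leq D$ in $L_p^+(\M)$ forces $\|C\|_p\leq\|D\|_p$ (as $\tau(C^{p})\leq\tau(D^{p})$ for $p\geq 1$), this yields $\|T(x)\|_p\leq\|T(x^{p})\#_{\theta}T(x^{p/2})\|_p$. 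I would then invoke the Hölder inequality for geometric means: whenever $\frac1p=\frac{1-\theta}{p_0}+\frac{\theta}{p_1}$ one has $\|A\#_\theta B\|_p\leq\|A\|_{p_0}^{1-\theta}\|B\|_{p_1}^{\theta}$, a consequence of the Ando--Hiai log-majorization $\prod_{i\leq k}\mu_i(A\#_\theta B)\leq\prod_{i\leq k}\mu_i(A)^{1-\theta}\mu_i(B)^{\theta}$ for the generalized singular numbers, combined with the usual Hölder inequality. The endpoints $p_0=1$, $p_1=2$ are admissible since $\frac{1-\theta}{1}+\frac{\theta}{2}=\big(\frac2p-1\big)+\big(1-\frac1p\big)=\frac1p$, and they give
\[
\big\|T(x^{p})\#_{\theta}T(x^{p/2})\big\|_p\leq\|T(x^{p})\|_1^{\,1-\theta}\,\|T(x^{p/2})\|_2^{\,\theta}=\|T(x^{p})\|_1^{\,2/p-1}\,\|T(x^{p/2})\|_2^{\,2-2/p},
\]
which is exactly the asserted bound.

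The commuting identity and the exponent bookkeeping are routine; the entire substance of the proof sits in the two operator-mean inequalities. The transformer inequality is standard Kubo--Ando theory, but the Hölder inequality for the geometric mean at the $L_1$ endpoint is the genuinely delicate input: this is precisely where the noncommutativity bites, since $T(x^{p})$ and $T(x^{p/2})$ need not commute and a naive Hölder estimate is unavailable, so one must appeal to the Ando--Hiai log-majorization (or, equivalently, to an antisymmetric-tensor-power/complex-interpolation argument). As a sanity check, every inequality above becomes an equality when $T=\mathrm{id}$ (then $x^{p}\#_\theta x^{p/2}=x$ and the Hölder step reduces to moment log-convexity with equality), confirming that the chain of estimates introduces no unnecessary slack and is consistent with the known sharpness of the statement.
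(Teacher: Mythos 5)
Your argument is correct in substance, and it follows a genuinely different route from the paper's. The paper proves the lemma by complex interpolation: with $\theta=2-\tfrac2p$ it applies the three-lines theorem to the analytic family $F(z)=T(x^{p-zp/2})$ on the strip, using $L_p=(L_1,L_2)_\theta$, and controls the boundary values by the Schwarz-type factorization $T(y)=T(|y^*|)^{1/2}\gamma T(|y|)^{1/2}$ ($\gamma\in\mathcal{M}$ a contraction), which gives $\|T(y)\|_q\le\|T(|y|)\|_q$ for normal $y$; the moduli of $x^{p-itp/2}$ and $x^{p/2-itp/2}$ are exactly $x^p$ and $x^{p/2}$, which produces the two boundary norms. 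You instead exploit the identity $x=x^p\,\#_\theta\,x^{p/2}$, push it through $T$ via the Kubo--Ando transformer inequality for positive unital maps (legitimate here: Markov maps are completely positive and normal, and the inequality extends to non-invertible arguments by monotone limits), and finish with a mixed-exponent H\"older inequality for the weighted geometric mean. Your exponent bookkeeping is right, and your route even yields the stronger \emph{operator} inequality $0\le T(x)\le T(x^p)\,\#_\theta\,T(x^{p/2})$, of which the paper's statement is only the norm shadow.

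The one caveat concerns your key input. The lemma is stated over an arbitrary noncommutative probability space $(\mathcal{M},\tau)$, and the Ando--Hiai log-majorization as you justify it (antisymmetric tensor powers) is a matrix/$\mathcal{B}(H)$ theorem: antisymmetric tensor powers have no analogue in a general finite von Neumann algebra, so for Fack--Kosaki generalized $s$-numbers the inequality
$$
\int_0^t\log\mu_s\big(A\,\#_\theta\,B\big)\,ds\;\le\;\int_0^t\Big[(1-\theta)\log\mu_s(A)+\theta\log\mu_s(B)\Big]\,ds,\qquad t>0,
$$
needs a separate argument. It is true, and can be proved, for instance, by taking $p$ to be a spectral projection of $C=A\,\#_\theta\,B$ with $\tau(p)=t$, so that $\int_0^t\log\mu_s(C)\,ds=\log\Delta_{p\mathcal{M}p}(pCp)$ for the Fuglede--Kadison determinant $\Delta_{p\mathcal{M}p}$, then using the compression inequality $pCp\le(pAp)\,\#_\theta\,(pBp)$ (your transformer inequality again, for the unital completely positive map $x\mapsto pxp$ into $p\mathcal{M}p$), the multiplicativity of $\Delta$, which gives $\Delta\big((pAp)\#_\theta(pBp)\big)=\Delta(pAp)^{1-\theta}\Delta(pBp)^{\theta}$, and finally $\mu_s(pAp)\le\mu_s(A)$. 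So your proof is viable in full generality, but as written it is complete only for $\mathcal{M}=\mathbb{M}_n$, and its crucial ingredient is heavier machinery than the two standard facts (three lines plus the Stinespring-type factorization) on which the paper's half-page proof rests.
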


\begin{proof}
This is a standard application of complex interpolation. If $\theta=2-\frac 2 p$, then $L_p(\mathcal{M})$ is the interpolated space $(L_1(\mathcal{M}),L_2(\mathcal{M}))_\theta$. Consider the function $F(z)=T(x^{p-zp/2})$, which is holomorphic on the strip $S=\{0<{\rm Re}\,z<1\}$ and continuous on
$\overline S$. By interpolation
$$
\|T(x)\|_p=\|F(\theta)\|_p\leq \sup_{t\in \bR} \|F(it)\|_1^{1-\theta} \sup_{t\in \bR} \|F(1+it)\|_2^{\theta}.
$$
Recall the following factorization: for any $y\in L_p(\M)$ there exists a contraction $\gamma\in \M$ such that 
$$
T(y)=T(|y^*|)^{1/2}\gamma T(|y|)^{1/2}.
$$
Therefore, if $y$ is normal then by H\"older's inequality we know that $\|T(y)\|_q\leq\|T(|y|)\|_q$. We deduce that for any $t\in \bR$, $\|F(it)\|_1\leq \|T(x^p)\|_1$ and $\|F(1+it)\|_2\leq \|T(x^{p/2})\|_2$.
\end{proof}

\begin{lemma}\label{pbig}
Let $\alpha\geq 1$, $p\geq 1$ and  $T$ as above. Then for all $x\in L_{p\alpha}^+(\M) \hskip-1pt :$
$$
\|T(x^\alpha)\|_p\geq\|T(x)^\alpha\|_p.
$$ 
\end{lemma}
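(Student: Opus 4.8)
The plan is to reduce the estimate to a single operator inequality and then combine it with the monotonicity of the $L_s$-norm on the positive cone. The key point is the following asymmetry between the power functions: although $t\mapsto t^\alpha$ is \emph{not} operator convex for $\alpha>2$ (so one cannot expect $T(x)^\alpha\leq T(x^\alpha)$ as operators in general), the function $t\mapsto t^{1/\alpha}$ \emph{is} operator concave on $[0,\infty)$ for every $\alpha\geq 1$, and $T$ is unital and completely positive. This is precisely the hypothesis needed for the operator Jensen inequality, and routing the argument through $t^{1/\alpha}$ is what sidesteps the lack of operator convexity.

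First I would record the operator inequality. By spectral truncation it suffices to prove the asserted norm bound for bounded $x\in\M^+$, the general case $x\in L_{p\alpha}^+(\M)$ following by letting the scalar norms converge (the power map and $T$ are continuous in the relevant $L_r$ norms). For bounded $x$, writing $x=(x^\alpha)^{1/\alpha}$ and applying the Davis--Choi--Hansen--Pedersen (operator Jensen) inequality to the unital completely positive map $T$ and the operator concave function $f(t)=t^{1/\alpha}$ gives
$$
T(x)=T\big((x^\alpha)^{1/\alpha}\big)=T\big(f(x^\alpha)\big)\leq f\big(T(x^\alpha)\big)=T(x^\alpha)^{1/\alpha}.
$$
Both sides are positive, so the inequality $0\leq T(x)\leq T(x^\alpha)^{1/\alpha}$ holds in $\M^+$.

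Next I would upgrade this to the desired norm inequality. The $L_s$-norm is monotone on the positive cone — immediate from the duality formula $\|a\|_s=\sup\{\tau(ab):b\geq 0,\ \|b\|_{s'}\leq 1\}$ together with the fact that $\tau(cb)\geq 0$ whenever $c\geq 0$ and $b\geq 0$. Applying this with $s=p\alpha$ yields
$$
\|T(x)\|_{p\alpha}\leq \big\|T(x^\alpha)^{1/\alpha}\big\|_{p\alpha}=\|T(x^\alpha)\|_p^{1/\alpha},
$$
where the equality is the elementary identity $\tau\big((T(x^\alpha)^{1/\alpha})^{p\alpha}\big)^{1/(p\alpha)}=\tau\big(T(x^\alpha)^p\big)^{1/(p\alpha)}$. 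Raising both sides to the power $\alpha$ and using $\|T(x)^\alpha\|_p=\|T(x)\|_{p\alpha}^\alpha$ (both equal $\tau(T(x)^{p\alpha})^{1/p}$) gives exactly $\|T(x)^\alpha\|_p\leq\|T(x^\alpha)\|_p$.

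The only genuine subtlety, and the reason a direct operator-convexity argument on $t^\alpha$ fails, is the restricted range of operator convexity of power functions; passing instead through the operator concavity of $t^{1/\alpha}$ and through norm (rather than operator) monotonicity avoids this completely. The remaining ingredients — the reduction from $L_{p\alpha}^+(\M)$ to bounded $x$ and the norm-monotonicity on the positive cone — are routine and I expect no difficulty there.
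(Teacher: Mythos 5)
Your proof is correct, but it takes a genuinely different route from the paper's. The paper only uses operator convexity of $t\mapsto t^\alpha$ in the range $\alpha\in[1,2]$, where it does hold, to get $0\leq T(x)^\alpha\leq T(x^\alpha)$ and hence the norm inequality there, and then reaches all $\alpha\geq 1$ by a doubling induction: if the lemma holds for $\alpha$, it holds for $2\alpha$, via
$$
\|T(x)^{2\alpha}\|_p=\|T(x)^{2}\|_{\alpha p}^\alpha\leq \|T(x^{2})\|_{\alpha p}^\alpha
=\|T(x^{2})^\alpha\|_{p}\leq \|T(x^{2\alpha})\|_p.
$$
You instead dispose of all $\alpha\geq 1$ in one stroke by applying the Davis--Choi (operator Jensen) inequality for unital positive maps to the operator \emph{concave} function $t^{1/\alpha}$, obtaining $0\leq T(x)\leq T(x^\alpha)^{1/\alpha}$, and then using monotonicity of $\|\cdot\|_s$ on the positive cone together with the identities $\|T(x^\alpha)^{1/\alpha}\|_{p\alpha}=\|T(x^\alpha)\|_p^{1/\alpha}$ and $\|T(x)^\alpha\|_p=\|T(x)\|_{p\alpha}^\alpha$. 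Your diagnosis of the obstruction is exactly the one the paper's induction is designed to circumvent: $t^\alpha$ fails to be operator convex for $\alpha>2$, so the naive pointwise comparison is unavailable beyond $[1,2]$. What each approach buys: the paper's argument is more elementary, needing only the Kadison--Schwarz-type inequality on $[1,2]$ plus arithmetic of norms, while yours is more direct and uniform in $\alpha$, at the price of invoking the general operator Jensen theorem; both rest on the same final ingredient (norm monotonicity on positives), and both handle unbounded $x$ by the same routine truncation, which the paper leaves implicit and you rightly flag as harmless.
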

\begin{proof}
The fact that $p\geq 1$ ensures that all elements are well defined. By operator convexity of the map $t\mapsto t^\alpha$, the result is obvious if $\alpha\in [1,2]$ because $0\leq T(x)^\alpha\leq T(x^\alpha)$. Therefore, to conclude it suffices to note that if the lemma holds for $\alpha$, it also holds for $2\alpha$. Indeed,
$$
\|T(x)^{2\alpha}\|_p=\|T(x)^{2}\|_{\alpha p}^\alpha\leq \|T(x^{2})\|_{\alpha p}^\alpha
=\|T(x^{2})^\alpha\|_{p}\leq \|T(x^{2\alpha})\|_p.
$$
\end{proof}

\medskip

{\noindent \bf Proof of Theorem \ref{theoA}.}  Let $x\in L_p^0(\M)$ with $\|x\|_p=1$, and assume $\|T(x)\|_p= \gamma$. We will give an upper bound for $\gamma$. To do so, we first notice that we can assume $x=x^*$. This can be justified by the use of the so-called $2\times 2$ trick. Indeed, consider 
$$
\tilde x=\frac 1 {2^{1/p}}\begin{pmatrix} 0&x\\ x^*& 0\end{pmatrix}\in L_p(\mathbb{M}_2\tens \M).
$$
Then one has $\|\tilde x\|_p=1$ and $\|\mathrm{Id}_{\mathbb{M}_2}\tens T(\tilde x)\|_p= \gamma$, $\tilde x^*=\tilde
   x$, $\mathrm{Id}_{\mathbb{M}_2}\tens \mathsf{E}_{\mathcal{N}} (\tilde x)=0$ and $\mathrm{Id}_{\mathbb{M}_2}\tens T$ is still a Markov map on $\mathbb{M}_2\tens \M$. 
   
 Using that $x$ is self-adjoint, we write $x=x_+-x_-$ the decomposition of $x$ into its positive and negative parts. Without loss of generality we can assume $\|x_+\|_p^p\geq \frac 12$. Define $\gamma_\pm$ by $\|T(x_\pm)\|_p=\gamma_\pm \|x_\pm\|_p$. We next use the fact that if $a,b\geq 0$ then $\|a-b\|_p^p\leq \|a\|_p^p+\|b\|_p^p$. Applying it to $T(x_+)$ and $T(x_-)$ yields
$$
 \|T(x_+)\|_p^p + \|T(x_-)\|_p^p \geq \|T(x)\|_p^p = \gamma^p.
$$
Therefore we have $\gamma_+^p\|x_+\|_p^p+\gamma_-^p\|x_-\|_p^p\geq \gamma^p$. At this point we need to distinguish two cases according to the value of $p$.

\noindent {\bf Case $p<2$.} Lemma \ref{psmall} applied to $x_+$ gives $\| T(x_+^{p/2})\|_2^{2-2/p} \|T(x_+^p)\|_1^{2/p-1}\geq \gamma_+\|x_+\|_p$. Since $T$ is a contraction on $L_1(\mathcal{M})$, we get
$$
\| T(x_+^{p/2})\|_2\geq \gamma_+^{\frac p {2p-2}}\|x_+\|_p^{p/2}.
$$
On the other hand, by orthogonality we have
$$
\|x_+\|_p^{p}=\|x_+^{p/2}\|_2^2=\|x_+^{p/2}-
\mathsf{E}_{\mathcal{N}}(x_+^{p/2})\|_2^2+\|\mathsf{E}_{\mathcal{N}} (x_+^{p/2})\|_2^2.
$$ 
Next, we write $T(x_+^{p/2})= \big(T(x_+^{p/2})-\mathsf{E}_{\mathcal{N}} T(x_+^{p/2})\big)+\mathsf{E}_{\mathcal{N}} T(x_+^{p/2})$. Then, by orthogonality again and the $L_2$-spectral gap assumption, we get
$$
\gamma_+^{\frac {2p} {2p-2}}\|x_+\|_p^{p}\leq c_2^2\|x_+^{p/2}- \mathsf{E}_{\mathcal{N}}(x_+^{p/2})\|_2^2+ \|\mathsf{E}_{\mathcal{N}} (x_+^{p/2})\|_2^2,
$$
which yields
$$
\big(\gamma_+^{\frac {2p} {2p-2}} -c_2^2\big)\|x_+^{p/2}-
\mathsf{E}_{\mathcal{N}}(x_+^{p/2})\|_2^2\leq \big(1-\gamma_+^{\frac {2p} {2p-2}})\|\mathsf{E}_{\mathcal{N}}
(x_+^{p/2})\|_2^2\leq \big(1-\gamma_+^{\frac {2p} {2p-2}})\|x_+\|_p^p.
$$ 
We can go back now to $L_p(\mathcal{M})$ by raising to the power $2/p$, see Lemma 2.2 in \cite{R1}. This means that we get
$$
\|x_+-\mathsf{E}_{\mathcal{N}}(x_+^{p/2})^{2/p}\|_p\leq 3 \|x_+^{p/2}-\mathsf{E}_{\mathcal{N}}(x_+^{p/2})\|_2
\|x_+\|_p^{1-p/2},
$$ 
and since $\mathsf{E}_{\mathcal{N}}(x_+^{p/2})^{2/p}\in L_p(\N)$ we arrive at
$$
\|x_+-\mathsf{E}_{\mathcal{N}}(x_+)\|_p\leq 2 \|x_+-\mathsf{E}_{\mathcal{N}}(x_+^{p/2})^{2/p}\|_p\leq 6
\|x_+^{p/2}-\mathsf{E}_{\mathcal{N}}(x_+^{p/2})\|_2 \|x_+\|_p^{1-p/2}.
$$
We conclude that either $\gamma_+\leq c_2^{(2p-2)/p}$, in which case we are done, or
$$
\|x_+-\mathsf{E}_{\mathcal{N}}(x_+)\|_p\leq 6 \sqrt {\frac {1-\gamma_+^{\frac {2p}{2p-2}}}{\gamma_+^{\frac {2p} {2p-2}} -c_2^2}} \|x_+\|_p.
$$ 
Obviously, the same estimate holds for $x_-$ and $\gamma_-$. Denote $\phi(t)=\sqrt {(1-t )/(t  -c_2^2)}$. Since we know that $\mathsf{E}_{\mathcal{N}}(x)=\mathsf{E}_{\mathcal{N}}(x_+)-\mathsf{E}_{\mathcal{N}}(x_-)=0$, we also have $1\leq
\|x_+-\mathsf{E}_{\mathcal{N}}(x_+)\|_p+\|x_--\mathsf{E}_{\mathcal{N}}(x_-)\|_p$. If $\gamma_+\leq c_2^{(2p-2)/p}$, then since $\gamma^p\leq (1+\gamma_+^p)/2$, we get an upper estimate and we are done. Therefore, we can assume that $\gamma_+> c_2^{(2p-2)/p}$.

We now split again into two cases. First, if $\|x_-\|_p \leq 1/4$, then $6 \leq \|x_+-\mathsf{E}_{\mathcal{N}}(x_+)\|_p$ and therefore $\phi(\gamma_+^{\frac {2p} {2p-2}})\geq 1/12$. That means 
$$
\gamma_+\leq \Big(\frac {12^2+c_2^2}{12^2+1}\Big)^{\frac {2p-2}{2p}} \;\; \mathrm{and} \; \mathrm{hence} \;\; \gamma^p\leq \frac {1+\gamma_+^p}2 \leq \frac{1+\Big(\frac {12^2+c_2^2}{12^2+1}\Big)^{\frac {2p-2}{2p}}}{2}.
$$
Finally, if $\|x_-\|_p > 1/4$, then $\gamma^p\leq 1-(1-\gamma_-^p)/(4^p)$ and as above we can assume $\gamma_-> c_2^{(2p-2)/p}$. But then $1/6 \leq \phi(\gamma_+^{\frac {2p}{2p-2}})+\phi(\gamma_-^{\frac {2p} {2p-2}})$ and one of these two terms has to be bigger than $1/2$. Hence 
$$
\gamma_+\wedge \gamma_-\leq \Big(\frac {12^2+c_2^2}{12^2+1}\Big)^{\frac{2p-2}{2p}} , \;\;\mathrm{which} \;\mathrm{also} \;\mathrm{gives} \;\mathrm{the} \;\mathrm{bound}\;\; \gamma^p\leq 1-\frac{1-\Big(\frac {12^2+c_2^2}{12^2+1}\Big)^{\frac{2p-2}{2}}}{4^p}.
$$
This is the worst possible bound. Regarding the quantitative behavior when $c_2 \to 1$, it is easy to check that there is some $C>0$ independent of $p\in ]1,2]$ so that 
$$
\lim_{c_2\to 1} \frac {1-c_p}{1-c_2}\geq C(p-1).
$$

\noindent {\bf Case $p>2$.} We use Lemma \ref{pbig} this time to get 
$$
\| T(x_+^{p/2})\|_2\geq \gamma_+^{\frac p {2}}\|x_+\|_p^{p/2}.
$$
As in the situation when $p<2$, from the above display we derive
$$
\big(\gamma_+^{p} -c_2^2\big)\|x_+^{p/2}- \mathsf{E}_{\mathcal{N}}(x_+^{p/2})\|_2^2\leq \big(1-\gamma_+^{p} )\|x_+\|_p^p .
$$
In this case, the way to go back to $L_p(\mathcal{M})$ by raising to power $2/p$ is via Ando's inequality (see Lemma 2.2 in \cite{CPPR}): 
$$
\|x_+-\mathsf{E}_{\mathcal{N}}(x_+)\|_p\leq 2 \|x_+-\mathsf{E}_{\mathcal{N}}(x_+^{p/2})^{2/p}\|_p\leq
2\|x_+^{p/2}-\mathsf{E}_{\mathcal{N}}(x_+^{p/2})\|_2^{2/p}.
$$ 
So either $\gamma_+\leq c_2^{2/p}$ or 
$$
\|x_+-\mathsf{E}_{\mathcal{N}}(x_+)\|_p\leq 2\left({\frac {1-\gamma_+^{p}}{\gamma_+^{p}-c_2^2}} \right)^{1/p}\|x_+\|_p=:2\psi(\gamma_+^p)\|x_+\|_p.
$$ 
We discuss as before: if $\gamma_+\leq c_2^{2/p}$ then $\gamma\leq [(1+c_2^2)/2]^{1/p}$. Otherwise, by assumption we have $\|x_+\|_p \geq 1/2$ so that $\|x_-\|_p\leq 1/2^{1/p}$. But by Corollary 2.5 in \cite{R2}, for $a\geq 0$ and $p>2$, we have $\|a-\mathsf{E}_{\mathcal{N}} a\|_p\leq \|a\|_p$. This implies that
$$
1- \frac{1}{2^{\frac1p}}\leq 1 -\|x_-\|_p \leq 1- \|x_--\mathsf{E}_\mathcal{N}\|_p \leq \|x_+ -\mathsf{E}_\mathcal{N}x_+\|_p,
$$
so one gets $\delta_p:=  \frac 12 (1-\frac 1{2^{1/p}})\leq \psi(\gamma_+^p)$. This leads to 
$$
\gamma\leq \left(\frac {1 +\delta_p^p \frac{1+c_2^2}2}{1+\delta_p^p}\right)^{1/p},
$$
which is enough for our purpose. Finally, one easily checks that for some universal $C$, 
$$
\lim_{c_2\to 1}\frac {1-c_p}{1-c_2}\geq \frac{C}{p}\Big(\frac {\mathrm{log}\; 2}{2p}\Big)^p.
$$
\fin

\begin{rk}{\rm 
As pointed out in \cite{CMP}, the result above is false for $p=1,\infty$, even when $T$ is a conditional expectation.}
\end{rk}

\begin{rk}{\rm 
In Theorem \ref{theoA} the requirement that $(\mathcal{M},\tau)$ is a probability space can be relaxed. If $\tau$ is only semifinite, the conclusion of the theorem holds if one adds the additional assumption that the set of fixed points satisfies
$$
\Big\{x: \|T(x)\|_2 = \|x\|_2\Big\} = L_2(\mathcal{N})
$$ 
for some von Neumann algebra $\mathcal{N}$ that is semifinite for $\tau$. Notice that this new requirement is necessary in the general case. Indeed, consider $T_s:\mathcal{B}(\ell_2) \to \mathcal{B}(\ell_2)$ given by $x \mapsto sxs^*$, where $s$ is a unilateral shift. Then, $T_s$ is Markov and the set of fixed points is not a von Neumann algebra. }
\end{rk}

\begin{rk}{\rm 
For commutative probability spaces, Theorem \ref{theoA} has an elegant
proof in \cite{HMO}, Proposition 4.1. All the arguments there carry over to
von Neumann algebras. This provides a different estimate for $c_p$, namely
$c_p\leq (1-2^{2-p^*}(1-c_2))^{1/p^*}$, where $p^*=\max\{ p,p'\}$.
This behavior of $c_p$ is better than ours for $p>2$ and $c_2$ close to $1$.
  }
\end{rk}

\section{$L_p$-spectral gap implies $L_2$-spectral gap}

We keep the same setting as in the previous section. This time we only need one auxiliary lemma.

\begin{lemma}\label{pto2}
Let $T:\M\to \M$ be a Markov map. Then for all $y\in L_2(\M)$
$$
\| T(M_{2,p}(y))-M_{2,p}(y)\|_p\leq C \|T(y)-y\|_2^{\theta} \|y\|_2^{1-\theta},
$$
for some universal constant $C>0$ and $\theta=\frac 1 4\min\{\frac p 2,\frac 2 {p}\}$.
\end{lemma}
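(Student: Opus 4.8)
The plan is to estimate the Hölder-type difference $\|T(M_{2,p}(y)) - M_{2,p}(y)\|_p$ by splitting it into two pieces using the triangle inequality: a piece measuring how far $T$ moves $M_{2,p}(y)$ on the $L_2$ side, and a piece controlling the distance between ``apply $T$ then Mazur map'' and ``apply Mazur map then $T$''. Concretely, I would insert the intermediate quantity $M_{2,p}(T(y))$ and write
\begin{equation*}
\|T(M_{2,p}(y)) - M_{2,p}(y)\|_p \le \|T(M_{2,p}(y)) - M_{2,p}(T(y))\|_p + \|M_{2,p}(T(y)) - M_{2,p}(y)\|_p.
\end{equation*}
The second term is handled directly by the Hölder continuity of the Mazur map on spheres recorded from \cite{R1}: since $M_{2,p}$ is $\min\{1,2/p\}$-Hölder, this term is bounded by $C\|T(y)-y\|_2^{\min\{1,2/p\}}$ after normalizing by the sphere radius, which produces the factor $\|y\|_2^{1-\theta}$ by homogeneity once exponents are tracked. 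The first term is the genuinely noncommutative difficulty: it asks whether $T$ and the Mazur map nearly commute, i.e. whether $T(f|f|^{2/p-1}) \approx T(f)|T(f)|^{2/p-1}$.

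For that first term, my approach would be to exploit that $T$ is a contraction on every $L_p(\M)$ together with the commutation $\mathsf{E}_{\mathcal{N}} T = T \mathsf{E}_{\mathcal{N}}$, but the key structural tool should be a Powers--Størmer / Araki--Lieb–Thirring type inequality for Markov maps, controlling $\|T(|f|^s) - |T(f)|^s\|$ in the appropriate $L_q$ norm. Lemma \ref{pbig} already gives the one-sided operator-convexity bound $\|T(x^\alpha)\|_p \ge \|T(x)^\alpha\|_p$ for positive $x$; I expect the near-commutation of $T$ with fractional powers to follow from combining such monotonicity with interpolation of the Mazur map estimate at the endpoints, so that the defect $\|T(M_{2,p}(y)) - M_{2,p}(T(y))\|_p$ is again controlled by a power of $\|T(y) - y\|_2$. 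The cleanest route is probably to reduce to self-adjoint $y$ via the $2\times 2$ trick used in the proof of Theorem \ref{theoA}, so that $M_{2,p}(y) = y|y|^{2/p - 1}$ simplifies and one can work with the spectral decomposition of $y$.

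The main obstacle, and where I expect the bulk of the work to lie, is quantifying this near-commutation with the correct Hölder exponent $\theta = \tfrac14 \min\{p/2, 2/p\}$. The halving of the natural Mazur exponent $\min\{1, 2/p\}$ to $\tfrac12\min\{p/2,2/p\}$, and then the further factor $\tfrac12$ producing $\tfrac14$, strongly suggests that two successive applications of an interpolation/Hölder argument are needed — one to pass the perturbation through $T$ and one to pass it through the Mazur map — each costing a square root in the exponent. I would therefore set up an interpolation between an $L_\infty$-type bound (where everything is a contraction and the defect is controlled crudely by a constant) and the $L_2$ bound $\|T(y) - y\|_2$, and optimize; the precise bookkeeping of which $L_q$ norms appear at each stage, and ensuring the constant $C$ is universal and the $\|y\|_2$ powers balance by homogeneity, is the delicate part. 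I would not attempt to optimize the exponent beyond what the subsequent use of this lemma (presumably in the proof of part (2) of Theorem \ref{theoremA}) actually requires.
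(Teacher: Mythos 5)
Your overall skeleton does coincide with the paper's: reduce to $y=y^*$ by the $2\times 2$ trick, compare $T(M_{2,p}(y))$ with the ``swapped'' quantity $M_{2,p}(T(y))$, and control one of the two resulting terms by the H\"older continuity of the Mazur map from \cite{R1}. But the paper performs this split only \emph{after} writing $y=y_+-y_-$ and treating each positive part separately, so that $M_{2,p}(y_\pm)=y_\pm^{2/p}$ and all operators in sight are positive; this ordering matters, because $M_{2,p}(T(y))$ involves the positive and negative parts of $T(y)$, which are \emph{not} $T(y_+)$ and $T(y_-)$, so splitting before decomposing (as you do) leaves you facing exactly that comparison problem. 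More seriously, for the term you rightly call the genuinely noncommutative difficulty, your proposal contains no argument, and the mechanism you suggest would fail: the map $y\mapsto T(M_{2,p}(y))-M_{2,p}(T(y))$ is nonlinear, so there is no interpolation theorem to invoke between an $L_\infty$-type bound and an $L_2$ bound, and Lemma \ref{pbig}, being a one-sided norm inequality, cannot control a difference of operators. What the paper actually does for a positive element $a=y_\pm$ is a short concrete chain: operator convexity of $t^{2/p}$ and operator concavity of $t^{1/p}$ give the two-sided bound $0\leq T(a^{2/p})-T(a)^{2/p}\leq T(a^2)^{1/p}-T(a)^{2/p}$; Ando's inequality (Lemma 2.2 in \cite{CPPR}) then yields $\|T(a^{2/p})-T(a)^{2/p}\|_p^p\leq \|T(a^2)-T(a)^2\|_1$; and since $T(a^2)-T(a)^2\geq 0$ by Kadison--Schwarz, this $L_1$ norm equals the trace, so trace preservation gives $\|T(a^2)-T(a)^2\|_1=\tau(a^2-T(a)^2)\leq 2\|a-T(a)\|_2\,\|a\|_2$. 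None of this appears in your proposal.

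The second missing ingredient is the step producing one of the square roots in $\theta$. After reducing to positive parts, every estimate is expressed in terms of $\|T(y_\pm)-y_\pm\|_2$, not $\|T(y)-y\|_2$, and it is not formal that $T$ moving $y$ only slightly forces it to move $y_+$ and $y_-$ only slightly. The paper invokes the Powers--St{\o}rmer-type estimate $\|T(y_\pm)-y_\pm\|_2^2\leq 2\|T(y)-y\|_2\,\|y\|_2$ from the proof of Corollary 2.4 in \cite{CPPR}; this is where one square root is lost, and the other loss (a $1/p$-th root) comes from Ando's inequality above --- not, as your heuristic has it, one square root for passing the perturbation through $T$ and one for passing it through the Mazur map. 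Since your proposal neither states nor proves this positive/negative-part estimate, and replaces the commutation-defect bound by an interpolation that cannot be carried out, the two central ingredients of the proof are absent.
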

\begin{proof}
This is a variant of Corollary 2.4 (and Remark 2.5) in \cite{CPPR} where this is done for $p>2$. Let $1<p<2$ for the rest of the proof; this is the only case that we need to consider. We want an upper bound for $T(M_{2,p}(y))-M_{2,p}(y)$. As in section \ref{sec2}, by the $2\times 2$-trick, we can reduce to proving the result for $y=y^*\in L_2(\M)$. Again decompose $y=y_+-y_-$, so that
$$
T(M_{2,p}(y))-M_{2,p}(y) = T(y_+^{2/p}-y_-^{2/p}) - (y_+^{2/p}-y_-^{2/p}) = \left[ T(y_+^{2/p}) - y_+^{2/p}\right]  - \left[T(y_-^{2/p}) - y_-^{2/p} \right].
$$
We shall prove the desired estimate for $y_+$ and $y_-$ separately instead of working with $y$. To that end, write
$$
\|T(y_+^{2/p})-y_+^{2/p}\|_p \leq \|T(y_+^{2/p})-T(y_+)^{2/p}\|_p + \|T(y_+)^{2/p}-y_+^{2/p}\|_p =: \mathrm{I} + \mathrm{II}.
$$
We shall estimate $\mathrm{I}$ and $\mathrm{II}$ separately. By operator convexity of $t\mapsto t^\gamma$ for $\gamma\in[1,2]$ and its operator concavity for $\gamma\in ]0,1]$, we get
$$
0\leq T(y_+^{2/p})-T(y_+)^{2/p}\leq T(y_+^2)^{1/p}-T(y_+)^{2/p}.
$$
Then, by Ando's inequality we get
\begin{eqnarray*}
\mathrm{I}^p &\leq&
    \|T(y_+^2)-T(y_+)^2\|_1\\
&=& \tau(T(y_+^2)-T(y_+)^2)=\tau(y_+^2-T(y_+)^2)\\ &\leq& 
2\|y_+-T(y_+)\|_2\|y_+\|_2.
\end{eqnarray*}
On the other hand, by Lemma 2.2 in \cite{R1}, $\mathrm{II} \leq 3\|T(y_+)-y_+\|_2\|y_+\|_2^{2/p-1}$, and so 
$$
\mathrm{II} \leq C\|T(y_+)-y_+\|_2^{1/p}\|y_+\|_2^{1/p}
$$ 
for some universal $C$. Of course, the same estimates apply to $y_-$. But we know that $\|T(y_\pm)-y_\pm\|_2^2\leq 2 \| T(y)-y\|_2 \|y\|_2$ by the proof of Corollary 2.4 in \cite{CPPR}. Finally, we collect everything and we get 
\begin{eqnarray*}
\|T(M_{2,p}(y))-M_{2,p}(y)\|_p & \leq & \mathrm{I} + \mathrm{II} \\
& \leq & C\left(\| T(y)-y\|_2^{1/p} \|y\|_2^{1/p} + \| T(y)-y\|_2 \|y\|_2^{2/p-1} \right) \\
& \leq & C  \| T(y)-y\|_2^{1/2p} \|y\|_2^{3/2p}, \\
\end{eqnarray*}
for some universal $C>0$, which is enough.
\end{proof}

\begin{rk}{\rm 
The exponent $\theta$ is probably not optimal. }
\end{rk}

\begin{thm}\label{theoB}
Assume $T$ is a factorizable Markov map.
Let $1<p\neq 2<\infty$ and assume there is some constant $c_p<1$ such that 
for all $x\in L_p^0(\M)$
$$\|T(x)\|_p\leq c_p \|x\|_p.$$
Then  there exists a constant $c_2=c(p,c_p)<1$ such that for all $x\in L_2^0(\M)$
$$\|T(x)\|_2\leq c_2 \|x\|_2.$$
\end{thm}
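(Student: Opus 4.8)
The plan is to reduce everything to the self-adjoint case by replacing $T$ with $S:=T^{*}T$, where $T^{*}$ is the adjoint of $T$ for the inner product of $L_2(\M)$. Since the trace-adjoint of a completely positive map is again completely positive (one dualizes the Kraus/Stinespring data), and since $T$ is unital iff $T^{*}$ is trace preserving and $T$ is trace preserving iff $T^{*}$ is unital, the map $T^{*}$ is again Markov, and hence so is $S$. Moreover $S$ is self-adjoint and positive on $L_2(\M)$ with $0\le S\le 1$, it fixes $L_p(\N)$ and commutes with $\mathsf{E}_{\mathcal{N}}$ (take adjoints in $\mathsf{E}_{\mathcal{N}}T=T\mathsf{E}_{\mathcal{N}}$ on $L_2$, then extend by density to $L_p$), and it inherits the $L_p$-gap: for $x\in L_p^0(\M)$ one has $Tx\in L_p^0(\M)$, so since $T^{*}$ is an $L_p$-contraction, $\|Sx\|_p\le\|Tx\|_p\le c_p\|x\|_p$. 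Finally $\|Tx\|_2^2=\langle Sx,x\rangle$, and on the $S$-invariant subspace $L_2^0(\M)$ the norm of the positive operator $S$ equals $\sup\langle S\,\cdot\,,\cdot\rangle$; thus $T$ has an $L_2$-gap if and only if $S$ does. It therefore suffices to produce an $L_2$-gap for the self-adjoint Markov map $S$.

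So fix $y\in L_2^0(\M)$ with $\|y\|_2=1$ and put $\varepsilon^{2}:=1-\langle Sy,y\rangle\ge 0$. Because $0\le S\le 1$ we have $S^{2}\le S$, whence $\|Sy\|_2^2=\langle S^2y,y\rangle\le\langle Sy,y\rangle$ and
$$
\|Sy-y\|_2^2=\|Sy\|_2^2-2\langle Sy,y\rangle+1\le \langle Sy,y\rangle-2\langle Sy,y\rangle+1=\varepsilon^2 .
$$
Now apply Lemma \ref{pto2} to the Markov map $S$: the element $f:=M_{2,p}(y)$ has $\|f\|_p=1$ and $\|Sf-f\|_p\le C\varepsilon^{\theta}$. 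Decompose $f=f^0+\mathsf{E}_{\mathcal{N}}f$ with $f^0\in L_p^0(\M)$. Since $S$ fixes $L_p(\N)$ we get $Sf-f=Sf^0-f^0$, and the $L_p$-gap gives $\|Sf^0\|_p\le c_p\|f^0\|_p$; hence $(1-c_p)\|f^0\|_p\le\|Sf^0-f^0\|_p\le C\varepsilon^{\theta}$, so that $f$ is $L_p$-close, at distance $O(\varepsilon^{\theta})$, to $\mathsf{E}_{\mathcal{N}}f\in L_p(\N)$.

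To finish, transport this back to $L_2$ with the inverse Mazur map $M_{p,2}$, which by \cite{R1} is $\min\{1,p/2\}$-H\"older on spheres. Once $\varepsilon$ is small, $\|\mathsf{E}_{\mathcal{N}}f\|_p\ge 1-\|f^0\|_p$ is close to $1$, so after normalizing we find $\|f-\mathsf{E}_{\mathcal{N}}f/\|\mathsf{E}_{\mathcal{N}}f\|_p\|_p\le 2\|f^0\|_p$, and applying $M_{p,2}$ yields that $y=M_{p,2}(f)$ lies within $L_2$-distance $C'\varepsilon^{\theta'}$ (with $\theta'=\theta\min\{1,p/2\}$) of $M_{p,2}\big(\mathsf{E}_{\mathcal{N}}f/\|\mathsf{E}_{\mathcal{N}}f\|_p\big)\in L_2(\N)$. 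But $y\perp L_2(\N)$ with $\|y\|_2=1$, so its distance to $L_2(\N)$ equals $1$. Thus $1\le C'\varepsilon^{\theta'}$, which forces $\varepsilon\ge\delta(p,c_p)>0$ and hence $\|Ty\|_2^2=1-\varepsilon^2\le 1-\delta^2=:c_2^2<1$, the desired $L_2$-gap.

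The one genuine obstacle is the non-self-adjointness of $T$: Lemma \ref{pto2} transports an $L_2$ \emph{near-fixed-point} (one with $\|Ty-y\|_2$ small) into $L_p$, whereas the failure of the $L_2$-gap only supplies vectors with $\|Ty\|_2$ close to $\|y\|_2$, and for non-self-adjoint $T$ these two conditions differ. Passing to $S=T^{*}T$ dissolves the difficulty, since for a self-adjoint contraction near-maximality of $\|Sy\|_2$ does force $Sy\approx y$. I note that this route does not seem to require factorizability; the hypothesis would instead enter an alternative argument based on the dilation $T=\mathsf{E}_{\mathcal{M}}\pi$, through the orthogonal splitting $\|Ty\|_2^2=\|y\|_2^2-\|\pi(y)-Ty\|_2^2$. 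The only nonroutine input above is the standard fact that the trace-adjoint of a completely positive map is completely positive, which is what I would verify carefully.
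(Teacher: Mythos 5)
Your proof is correct, and it takes a genuinely different route from the paper's. The paper uses factorizability exactly where you guessed it would enter: writing $T=\mathsf{E}_{\mathcal{M}}\pi$ with $\pi$ a trace-preserving $*$-representation into a larger algebra $(\tilde\M,\tilde\tau)$, the orthogonal splitting $\|x\|_2^2=\|\mathsf{E}_{\mathcal{M}}\pi(x)\|_2^2+\|(1-\mathsf{E}_{\mathcal{M}})\pi(x)\|_2^2$ converts near-preservation of the $L_2$-norm by $T$ into the statement that $\pi(x)$ is a near-fixed point of the self-adjoint Markov map $\mathsf{E}_{\mathcal{M}}$; Lemma \ref{pto2} is then applied to the pair $(\mathsf{E}_{\mathcal{M}},\pi(x))$ inside $\tilde\M$, the $L_p$-gap of $T$ is imported via the fact that $\pi$ is an $L_p$-isometry commuting with Mazur maps, and the H\"older continuity of $M_{p,2}$ finishes as in your argument. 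You instead dissolve the non-self-adjointness by passing to $S=T^*T$: the trace-adjoint of a Markov map is Markov (a standard fact, and your verification sketch is the right one), so $S$ is a Markov map that is positive and self-adjoint on $L_2(\M)$, inherits the $L_p$-gap because $T^*$ is an $L_p$-contraction, fixes $L_p(\N)$ pointwise (this follows from $\operatorname{Ker}(1-T)=\operatorname{Ker}(1-T^*)$ for Hilbert-space contractions — worth spelling out, since it is the one claim you assert without justification), and satisfies $\|Tx\|_2^2=\langle Sx,x\rangle$; positivity of $S$ then turns near-maximality of $\langle Sy,y\rangle$ into smallness of $\|Sy-y\|_2$, which is precisely the input Lemma \ref{pto2} needs. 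From that point on (gap applied to $f^0$, normalization of $\mathsf{E}_{\mathcal{N}}f$ before invoking H\"older continuity of $M_{p,2}$ on the sphere, orthogonality of $y$ to $L_2(\N)$) your argument runs on the same machinery as the paper's, with quantitative bounds of the same flavor, and is carried out correctly — indeed your normalization of $\mathsf{E}_{\mathcal{N}}f$ is slightly more careful than the paper's use of the H\"older estimate. What your route buys is significant: it never leaves $\M$ and never invokes factorizability, so it proves Theorem \ref{theoB} — and hence part (2) of Theorem \ref{theoremA} — for arbitrary Markov maps, strengthening the stated result. Two small points to clean up: handle the degenerate case $\mathsf{E}_{\mathcal{N}}f=0$ separately (there the gap applied to $f$ itself bounds $\varepsilon$ directly), and replace $\delta$ by $\min\{\delta,1\}$ so that $c_2=\sqrt{1-\delta^2}$ is well defined.
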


\begin{proof}
We remind the reader that the factorizability assumption means that there is another finite von Neumann algebra $(\tilde \M,\tilde \tau)$ containing $(\M,\tau)$ with a trace preserving conditional expectation $\E$ and a trace preserving $*$-representation $\pi :\M\to \tilde \M$ so that $T(x)=\E \pi(x)$ for $x\in \M$. Let $x\in L_2(\M)$ with $\mathsf{E}_{\mathcal{N}}(x)=0$ and $\|x\|_2=1$. We want to find a lower bound for $\delta = \|x\|_2-\|T(x)\|_2$. First notice that by orthogonality $\delta^2 =\|\E\pi(x)- \pi(x)\|_2^2=\delta^2$. This also yields that for any $y\in \N$, $\pi(y)=y$ and $\E(y)=y$.

The idea is to use the hypothesis via the properties of the Mazur map. By Lemma \ref{pto2}, $\| \E M_{2,p}(\pi(x))- M_{2,p}(\pi(x))\|_p\leq \delta^\theta$. Set $\lambda: =\mathsf{E}_{\mathcal{N}}(M_{2,p}(x))$. Recalling that $\E \pi(\lambda) = \pi(\lambda) = \lambda$ because $ M_{2,p}(\pi(x))=\pi( M_{2,p}(x))$, the hypothesis yields
\begin{eqnarray*}
\| \E(\pi(M_{2,p}(x))-\lambda)\|_p & = & \| T(M_{2,p}(x))- \lambda \|_p\\
& \leq & c_p \| M_{2,p}(x)- \lambda\|_p \ = \ c_p \|\pi( M_{2,p}(x)-\lambda)\|_p \\
& \leq & c_p \left[ \| T(M_{2,p}(x))- \lambda \|_p+\|(\mathrm{Id}-\E) (\pi(M_{2,p}(x))-\lambda)\|_p \right]. \\
\end{eqnarray*}
Therefore, we get
$$
(1-c_p) \| M_{2,p}(x)- \lambda \|_p\leq  \delta^\theta.
$$
Taking now $\gamma=\min\{1,\frac p2\}$, the Mazur map $M_{p,2}$ is $\gamma$-H\"older with constant $Cp$ by the main theorem in \cite{R1} (for some universal $C$). Hence
$$
1=\|x\|_2=\|(1-\mathsf{E}_{\mathcal{N}})( x-M_{p,2}(\lambda))\|_2\leq \| x-M_{p,2}(\lambda) \|_2 \leq Cp \left( \frac {\delta^\theta}{1-c_p}\right)^\gamma.
$$
This means that for some universal $C$
$$
\delta \geq \left(\frac C p (1-c_p)\right)^{2/\theta}=1-c_2.
$$ 
\end{proof}


\section{An illustration}

Motivated by questions from interpolation theory in the paper \cite{CMP}, we give an illustration of Theorem \ref{theoremA}. Let $(\M,\tau)$ be a noncommutative probability space and assume that $\A$ and $\B$ are sub-algebras so that $\A\cap \B=\N$. Consider the Markov map $T=\mathsf{E}_\A \mathsf{E}_\B$. Our assumption yields that its fixed points algebra is exactly $\N$. On $L_p^0(\M)$, one can define a norm by 
$$
\|x\|_{\Sigma,p}=\|(1-\mathsf{E}_\A)x\|_p+\|(1-\mathsf{E}_\B)x\|_p.
$$ 
Of course one has $\|x\|_{\Sigma,p}\leq 4\|x\|_p$. One important question in \cite{CMP} was to know if they are equivalent. This is false in general:

\begin{prop}\label{eq}
Assume $1<p<\infty$, then $\|.\|_{\Sigma,p}$, $\|.\|_p$ are equivalent on $L_p^0(\M)$ if and only if $\mathsf{E}_\A \mathsf{E}_\B$ has an $L_p$-spectral gap.
\end{prop}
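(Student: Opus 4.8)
The plan is to prove the two implications separately. Since the bound $\|x\|_{\Sigma,p}\le 4\|x\|_p$ is recorded already, equivalence of the two norms on $L_p^0(\M)$ amounts to a reverse estimate $\|x\|_p\le K\|x\|_{\Sigma,p}$, and I would show that this is exactly what an $L_p$-spectral gap provides, and conversely. For the direction ``spectral gap $\Rightarrow$ equivalence'', I would start from the telescoping identity
\[
x-Tx=(1-\mathsf{E}_\B)x+(1-\mathsf{E}_\A)\mathsf{E}_\B x,
\]
valid for every $x\in L_p^0(\M)$ because $T=\mathsf{E}_\A\mathsf{E}_\B$. Rewriting the second summand as $(1-\mathsf{E}_\A)x-(1-\mathsf{E}_\A)(1-\mathsf{E}_\B)x$ and using that $\mathsf{E}_\A,\mathsf{E}_\B$ are contractions on $L_p(\M)$ (so that $\|1-\mathsf{E}_\A\|\le 2$), the triangle inequality yields $\|x-Tx\|_p\le 3\|x\|_{\Sigma,p}$. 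On the other hand, if $T$ has a spectral gap with constant $c_p<1$ then $\|x-Tx\|_p\ge \|x\|_p-\|Tx\|_p\ge (1-c_p)\|x\|_p$. Combining the two inequalities gives $\|x\|_p\le \tfrac{3}{1-c_p}\|x\|_{\Sigma,p}$, which is the desired equivalence.

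For the converse ``equivalence $\Rightarrow$ spectral gap'' I would argue by contraposition, exploiting that $L_p(\M)$ is uniformly convex for $1<p<\infty$. Assume $T$ has no spectral gap, so there is a sequence $x_n\in L_p^0(\M)$ with $\|x_n\|_p=1$ and $\|Tx_n\|_p\to 1$. The engine of the argument is the elementary fact that if $P$ is a contractive projection on a uniformly convex space and $\|u_n\|=1$, $\|Pu_n\|\to 1$, then $\|(1-P)u_n\|\to 0$: normalizing $v_n:=Pu_n/\|Pu_n\|$ one checks $\|\tfrac{u_n+v_n}{2}\|\to 1$ from $\|P\|\le1$, whence uniform convexity forces $\|u_n-v_n\|\to 0$ and then $\|(1-P)u_n\|\to 0$. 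Applying this first with $P=\mathsf{E}_\B$ (noting $1\ge\|\mathsf{E}_\B x_n\|_p\ge\|\mathsf{E}_\A\mathsf{E}_\B x_n\|_p=\|Tx_n\|_p\to 1$) gives $\|(1-\mathsf{E}_\B)x_n\|_p\to 0$, and then with $P=\mathsf{E}_\A$ applied to the normalized vectors $\mathsf{E}_\B x_n/\|\mathsf{E}_\B x_n\|_p$ gives $\|(1-\mathsf{E}_\A)\mathsf{E}_\B x_n\|_p\to 0$. Since $(1-\mathsf{E}_\A)x_n=(1-\mathsf{E}_\A)(1-\mathsf{E}_\B)x_n+(1-\mathsf{E}_\A)\mathsf{E}_\B x_n$ and the first term is controlled by $2\|(1-\mathsf{E}_\B)x_n\|_p$, I conclude $\|(1-\mathsf{E}_\A)x_n\|_p\to 0$, hence $\|x_n\|_{\Sigma,p}\to 0$ while $\|x_n\|_p=1$. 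This rules out equivalence of the two norms on $L_p^0(\M)$.

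The main obstacle is the converse direction: the forward estimate is purely algebraic, but the converse genuinely uses the geometry of $L_p$. The delicate points are the uniform convexity of the noncommutative $L_p$ spaces (classical, but to be cited) and the projection lemma above, which is precisely where uniform convexity enters. If a quantitative version were desired, one would replace the limiting argument by the explicit modulus of convexity of $L_p(\M)$, converting ``$\|Tx_n\|_p\to 1$'' into a quantitative lower bound for $\|x\|_{\Sigma,p}/\|x\|_p$; I expect this to be the only place where additional care is required.
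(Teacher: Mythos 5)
Your proof is correct and follows essentially the same route as the paper: the forward direction is the same telescoping/triangle-inequality argument (the paper writes $x=(1-\mathsf{E}_\A)x+\mathsf{E}_\A(1-\mathsf{E}_\B)x+Tx$ and gets constant $\tfrac{1}{1-c_p}$, you work with $x-Tx$ and get $\tfrac{3}{1-c_p}$), and the converse is the same contrapositive argument via uniform convexity of $L_p(\M)$, where your explicit contractive-projection lemma just spells out what the paper invokes implicitly.
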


\begin{proof}
The direction in which we assume that $T$ has a spectral gap was done in \cite{CMP}. However, we include the argument here. Indeed, let $x\in L_p^0(\M)$, then
$$
\|\mathsf{E}_\A \mathsf{E}_\B x\|_p\leq c_p \|x\|_p\leq c_p\Big(\| x-\mathsf{E}_\A x\|_p + \| \mathsf{E}_\A(x-\mathsf{E}_\B x)\|_p+\|\mathsf{E}_\A \mathsf{E}_\B x\|_p\Big).
$$  
We deduce $\|\mathsf{E}_\A \mathsf{E}_\B x\|_p\leq\frac {c_p}{1-c_p} \|x\|_{\Sigma,p}$. Since
$$
\|x\|_p\leq \|x-\mathsf{E}_\A x\|_p+ \|\mathsf{E}_\A(x-\mathsf{E}_\B x)\|_p+\|\mathsf{E}_\A \mathsf{E}_\B x\|_p,
$$
we get $\|x\|_p\leq \frac {1}{1-c_p} \|x\|_{\Sigma,p}$.

Assume now $T$ has no spectral gap. Then there exists a sequence of norm one elements $(x_n)$ in $L_p^0(\M)$ such that $\|\mathsf{E}_\A \mathsf{E}_\B x_n\|_p\to 1$. Necessarily $\|\mathsf{E}_\B x_n\|_p\to 1$, and thus by the uniform convexity of $L_p(\mathcal{M})$, we must have $\|\mathsf{E}_\B x_n -x_n\|_p\to 0$. Similarly we know that $\|\mathsf{E}_\A \mathsf{E}_\B x_n- \mathsf{E}_\B x_n\|_p \to 0$. This implies that $\|x\|_{\Sigma,p}\to 0$ due to the fact that $\|\mathsf{E}_\A x_n -x_n\|_p\to 0$, which in turn follows from the above and the decomposition
$$
\mathsf{E}_\A x_n -x_n =  \mathsf{E}_\A (x_n -\mathsf{E}_\B x_n) +  (\mathsf{E}_\A \mathsf{E}_\B x_n- \mathsf{E}_\B x_n ) +(\mathsf{E}_\B x_n -x_n).
$$ 
\end{proof}

\begin{cor}
The following are equivalent 
\begin{itemize}
\item[(1)] For some $1<p<\infty$, $\|.\|_{\Sigma,p}$, $\|.\|_p$ are equivalent on $L_p^0(\M)$.
\item[(2)]  For all $1<p<\infty$, $\|.\|_{\Sigma,p}$, $\|.\|_p$ are equivalent on $L_p^0(\M)$.
\item[(3)] For some $1<p<\infty$, $\mathsf{E}_\A \mathsf{E}_\B$ has a $L_p$-spectral gap.
\item[(4)] For all $1<p<\infty$, $\mathsf{E}_\A \mathsf{E}_\B$ has a $L_p$-spectral gap.
\end{itemize}

 If this holds then the norms $(\|.\|_{\Sigma,p})_{1<p<\infty}$ on $L_\infty^0(\M)$ form an interpolation chain.
\end{cor}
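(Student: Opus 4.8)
The plan is to treat the equivalences in two layers and then deduce the interpolation statement from them. Proposition \ref{eq} already settles, for each fixed index, that $\|\cdot\|_{\Sigma,p}$ and $\|\cdot\|_p$ are equivalent on $L_p^0(\M)$ precisely when $T:=\mathsf{E}_\A\mathsf{E}_\B$ has an $L_p$-spectral gap. Reading this at a single $p$ gives (1)$\Leftrightarrow$(3), and reading it uniformly in $p$ gives (2)$\Leftrightarrow$(4). Since (2)$\Rightarrow$(1) and (4)$\Rightarrow$(3) are trivial, the entire chain of equivalences closes as soon as one knows that an $L_p$-spectral gap at a single index propagates to every index; that is, it suffices to prove (3)$\Rightarrow$(4). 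This is exactly the content of Theorem \ref{theoremA}, \emph{provided} $T$ is factorizable, which is the one point that needs genuine input.

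First I would verify that $T=\mathsf{E}_\A\mathsf{E}_\B$ is factorizable. As recalled in the text, a product of factorizable maps is factorizable, so it is enough to show that a trace-preserving conditional expectation $\mathsf{E}_\A$ (and symmetrically $\mathsf{E}_\B$) is factorizable. I would produce the dilation via an amalgamated free product: take $\tilde\M=\M\ast_\A\M$, the free product of two copies of $\M$ amalgamated over $\A$, with its canonical trace $\tilde\tau$ restricting to $\tau$ on each copy; identify $\M$ with the first copy and let $\pi:\M\to\tilde\M$ be the embedding onto the second copy. Then $\pi$ is a trace-preserving $\ast$-representation, and the standard freeness identity $\mathsf{E}_{\M}\circ\pi=\mathsf{E}_\A$ (the conditional expectation of one free copy onto the other equals the expectation onto the amalgam) is exactly the requirement of Definition \ref{factorizable}. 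Alternatively one may simply cite that conditional expectations are factorizable from the general theory. Granting this, assume (3): $T$ has an $L_p$-spectral gap for some $p$. By Theorem \ref{theoremA}(2) (the case $p=2$ being immediate), $T$ has an $L_2$-spectral gap, and then Theorem \ref{theoremA}(1) upgrades this to an $L_q$-spectral gap for every $1<q<\infty$, which is (4).

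For the final assertion, assume the equivalent conditions hold. The projection $\mathrm{Id}-\mathsf{E}_\N$ is bounded (with norm at most $2$) on every $L_p(\M)$ and is the same operator across the scale, hence compatible with complex interpolation; its range $L_p^0(\M)$ is therefore a uniformly complemented subspace of $L_p(\M)$. A routine retraction/coretraction argument then shows that $\big(L_p^0(\M),\|\cdot\|_p\big)_{1<p<\infty}$ is an interpolation scale, i.e.\ $[L_{p_0}^0(\M),L_{p_1}^0(\M)]_\theta=L_{p_\theta}^0(\M)$ with equivalent norms when $\tfrac1{p_\theta}=\tfrac{1-\theta}{p_0}+\tfrac{\theta}{p_1}$. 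Under our hypothesis, $\|\cdot\|_{\Sigma,p}$ is equivalent to $\|\cdot\|_p$ on each $L_p^0(\M)$; since complex interpolation is an exact functor and hence insensitive (up to equivalence of norms) to replacing the endpoint norms by equivalent ones, the completions of $L_\infty^0(\M)$ for the norms $\|\cdot\|_{\Sigma,p}$ form an interpolation chain as well.

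The main obstacle is the factorizability of the composition $\mathsf{E}_\A\mathsf{E}_\B$: this is what unlocks the backward direction (3)$\Rightarrow$(4) through Theorem \ref{theoremA}(2), without which one would only have the forward propagation from $p=2$. Everything else --- the single-index equivalences from Proposition \ref{eq} and the transfer of the interpolation-scale property across an equivalent renorming --- is formal.
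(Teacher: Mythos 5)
Your proposal is correct and follows essentially the same route as the paper: the paper's proof is precisely the observation that $\mathsf{E}_\A\mathsf{E}_\B$ is factorizable combined with Proposition \ref{eq} and Theorems \ref{theoA} and \ref{theoB}. Your only additions are details the paper leaves implicit --- the amalgamated free product dilation $\M\ast_\A\M$ showing conditional expectations are factorizable (which is valid, and together with the paper's remark that products of factorizable maps are factorizable gives the needed fact), and the standard retraction argument for the interpolation chain --- both of which are fine.
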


\begin{proof}
Note that $\mathsf{E}_\A \mathsf{E}_\B$ is factorizable, so this is an easy combination of Proposition \ref{eq} and Theorems \ref{theoA} and \ref{theoB}.
\end{proof}

\begin{rk}
{\rm It follows from the symmetry  that if $\mathsf{E}_\A \mathsf{E}_\B$ has an $L_p$-spectral gap, 
$\mathsf{E}_\B \mathsf{E}_\A$ also does.} 
\end{rk}
\bibliographystyle{plain}

\end{document}